\newcommand\NN{\mathbb N}
\newcommand\Prob{\mathbf P}
\newtheorem*{lemma}{Lemma}
\newtheorem*{theorem}{Theorem}
\begin{document}

\title{The probability that two random integers are coprime}
\subjclass[2010]{11N37, 11M26}
\keywords{Arithmetic error terms,  Euler totient function, Riemann Hypothesis} 

\author[J. Bureaux]{Julien Bureaux}
\address[J. Bureaux]{MODAL'X, Université Paris Nanterre, 200 avenue de la République, 92001 Nanterre}
\email{julien.bureaux@math.cnrs.fr}

\author[N. Enriquez]{Nathana\"el Enriquez}
\address[N. Enriquez]{MODAL'X, Université Paris Nanterre, 200 avenue de la République, 92001 Nanterre}
\email{nathanel.enriquez@u-paris10.fr}
\address[N. Enriquez]{LPMA, Université Pierre et Marie Curie, 4 place Jussieu, 75005 Paris}

\begin{abstract}
An equivalence is proven between the Riemann Hypothesis and the speed of convergence to $6/{\pi^2}$ of the probability that two independent random variables following the same geometric distribution are coprime integers, when the parameter of the distribution goes to 0.

\end{abstract}

\maketitle

%The Riemann Hypothesis is the conjecture that all zeros of the Riemann zeta function in the strip $0 < \Re(s) < 1$ actually lie on the line $\Re(s)=\frac12$.
%It is well known to be related to the distribution of prime numbers. Namely, it was proven by von Koch in 1901 in ~\cite{koch_1901_distribution} that the conjecture is equivalent to the following asymptotic estimate for the number of primes up to $x$, which refines the error term in the prime number theorem:
%\[
%  \forall \epsilon >0, \qquad \pi(x) = \mathrm{Li}(x) + O(x^{\frac12+\epsilon}).
%\]
%
%The proof of this equivalence  is quite intricate, and is based on some tricky smoothing techniques going back to Chebyshev and von Mangoldt.
%In this note, we present a more straighforward equivalence, which is original to our knowledge and that sheds some light on the kind of arguments that are used in proving such results. It concerns another classical question from number theory:
What is the probability that two random integers are coprime?
Since there is no uniform probability distribution on $\NN$, this question must be made more precise. Before going into probabilistic considerations, we would like to remind a seminal result of Dirichlet~\cite{dirichlet_1889_bestimmung}: the set  $\mathcal P$ of ordered pairs of coprime integers has an asymptotic density in $\NN^2$ which is equal to $6/\pi^2 \approx 0.608$. The proof is not trivial and can be found in the book of Hardy and Wright (\cite{hardy_2008_introduction}, Theorem~332), but if one admits the existence of a density $\delta$, its value can be easily obtained. Indeed, the set  $\NN^2$ is the disjoint union of $\mathcal P$, $2\mathcal P$, $3\mathcal P$, \ldots which have respective density $\delta$, $\delta/2^2$, $\delta/3^2$, \ldots, hence
$$
1=\delta+\frac\delta{2^2}+ \frac\delta{3^2}+\cdots= \delta \sum_{k=1}^{+\infty} \frac{1}{k^2} = \delta \frac{\pi^2}{6}.
$$ 

A natural choice for picking two integers at random is to draw two random variables $X$ and $Y$ independently according to the uniform distribution on $\{1,2,\dots,n\}$ and let $n$ goes to $+\infty$. In this case,  denoting by $\varphi(n)$ the classical Euler totient function, the probability that $X$ and $Y$ are coprime equals
  \begin{equation}
    \label{eq:s}
    \frac1{n^2} \left(2\sum_{k=1}^n\varphi(k) -1\right)=\frac6{\pi^2}+O(\frac{\log n}n),
  \end{equation}
as was shown by Mertens \cite{mertens_1874_asymptotische} in 1874. The sharpest error term in this problem up to now was obtained in 1963 by Walfisz~\cite{walfisz_1963_weylsche} who replaced the $\log n$ term of Mertens by $(\log n)^{2/3}(\log\log n)^{4/3}$. One can quickly realize that the error term cannot boil down to $o(1/n)$. Indeed, the passage from $n$ to $n+1$  perturbates the average by a term of order $1/n$. A striking result by Montgomery~\cite{montgomery_1987_fluctuations} shows  that the fluctuations are actually at least of order $\sqrt{\log \log n}/n$. 

Recently, Kaczorowski and Wiertelak considered in \cite{KW1} a smoothing of the sequence ~\eqref{eq:s} above which corresponds to another family of probabilities. Namely, for a given $n$, the former uniform distribution on $\{1,2,\dots,n\}^2$ is replaced by the probability distribution  putting a weight $\log(\frac{n}{i+j})$ to the couple $(i,j)$ for all integers $i,j$ satisfying $2\leq i+j\leq n$, up to a normalization factor. Notice that, under this distribution, the two random variables $X$ and $Y$ have the same distribution but are not independent anymore. Simultaneously, they iterated this smoothing procedure in \cite{KW2}, and smartly recovered Montgomery's result. 
 
In this paper, we consider a natural parametrized family of probability distributions for $X$ and $Y$ that smoothes even more the fluctuations with respect to the parameter. For $\beta>0$, the random variables $X$ and $Y$ are drawn independently according to the geometric distribution of parameter $1-e^{-\beta}$:
\[
  \forall n  \in \NN, \qquad \Prob[X=n] = (1-e^{-\beta})e^{-\beta(n-1)}.
\]
The asymptotic regime corresponding to large integers is obtained by taking $\beta\to 0$. For this distribution, we are able to obtain a direct connection with the Riemann Hypothesis. 

\begin{theorem}
  Let $X,Y$ be two independent random variables following the geometric distribution of parameter $1-e^{-\beta}$.
  The Riemann Hypothesis holds if and only if, for all $\epsilon > 0$, as $\beta$ goes to $0$, 
  \begin{equation}
    \label{eq:h}
         \Prob[\gcd(X,Y) = 1] = \frac{6}{\pi^2} + \frac{6}{\pi^2}\beta + O(\beta^{\frac32-\epsilon}).
  \end{equation}
\end{theorem}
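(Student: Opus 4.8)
The plan is to compute $\Prob[\gcd(X,Y)=1]$ in closed form, recognise it — up to an elementary prefactor — as an inverse Mellin transform of a ratio built from $\zeta$, and then run the familiar dictionary between the location of the zeros of $\zeta$ and the size of arithmetic error terms. Write $q=e^{-\beta}$. Möbius inversion of the coprimality condition gives
\[
\Prob[\gcd(X,Y)=1]=(1-q)^2\!\!\sum_{\gcd(m,n)=1}\!\! q^{m+n-2}=(1-q)^2q^{-2}\sum_{d\ge1}\mu(d)\Bigl(\frac{q^d}{1-q^d}\Bigr)^{2}=(e^\beta-1)^2\,G(\beta),
\]
where $G(\beta):=\sum_{d\ge1}\mu(d)\,(e^{d\beta}-1)^{-2}$. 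Since $(e^\beta-1)^2=\beta^2(1+\beta+O(\beta^2))$ and $(e^\beta-1)^{-2}=\beta^{-2}(1-\beta+O(\beta^2))$, the Theorem is equivalent to the assertion that, for every $\epsilon>0$, $G(\beta)=\frac{6}{\pi^2}\beta^{-2}+O(\beta^{-1/2-\epsilon})$ as $\beta\to0$; note in particular that the $\beta^{-1}$ contributions cancel, which is exactly why the main term of \eqref{eq:h} is $\frac6{\pi^2}(1+\beta)$.

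Next I would pass to the Mellin transform. Using $(e^x-1)^{-2}=\sum_{j\ge2}(j-1)e^{-jx}$ one finds $\int_0^\infty x^{s-1}(e^x-1)^{-2}\,dx=\Gamma(s)\bigl(\zeta(s-1)-\zeta(s)\bigr)$ for $\Re s>2$, and hence, after twisting by $\sum_{d\ge1}\mu(d)d^{-s}=1/\zeta(s)$ and inverting, for $c>2$,
\[
G(\beta)=\frac1{2\pi i}\int_{(c)}M(s)\,\beta^{-s}\,ds,\qquad M(s):=\frac{\Gamma(s)\bigl(\zeta(s-1)-\zeta(s)\bigr)}{\zeta(s)} .
\]
The function $M$ is meromorphic on $\mathbb C$. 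In the half-plane $\Re s\ge\tfrac12$ its only pole is the simple one at $s=2$ coming from $\zeta(s-1)$, with residue $\Gamma(2)/\zeta(2)=6/\pi^2$; at $s=1$ the simple pole of $\zeta(s-1)-\zeta(s)$ is cancelled by the simple zero of $1/\zeta$, so $M$ is regular there; and apart from these, the only poles of $M$ in $\Re s>\tfrac12$ are the zeros $\rho$ of $\zeta$ with $\Re\rho>\tfrac12$, should any exist.

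Suppose RH holds. I would then shift the contour from $\Re s=c$ to $\Re s=\tfrac12+\epsilon$: only the pole at $s=2$ is crossed, contributing $\frac6{\pi^2}\beta^{-2}$, and the remaining integral $\frac1{2\pi i}\int_{(1/2+\epsilon)}M(s)\beta^{-s}\,ds$ is $O(\beta^{-1/2-\epsilon})$, because on that line $\Gamma(s)$ decays like $e^{-\pi|t|/2}$, $\zeta(s-1)$ and $\zeta(s)$ grow only polynomially, and $1/\zeta(s)\ll_\epsilon|t|^{\epsilon}$ under RH — so the integrand is absolutely integrable and the horizontal segments used in the shift tend to $0$. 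Thus $G(\beta)=\frac6{\pi^2}\beta^{-2}+O(\beta^{-1/2-\epsilon})$, which by the reduction of the first paragraph is \eqref{eq:h}.

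For the converse, suppose \eqref{eq:h} holds, equivalently $R(\beta):=G(\beta)-\frac6{\pi^2}\beta^{-2}=O(\beta^{-1/2-\epsilon})$ for all $\epsilon>0$; then $\int_0^1 R(\beta)\beta^{s-1}\,d\beta$ is analytic for $\Re s>\tfrac12$. On the other hand, splitting the Mellin integral for $G$ at $\beta=1$ gives, for $\Re s>2$,
\[
\int_0^1 R(\beta)\beta^{s-1}\,d\beta=M(s)-\frac{6/\pi^2}{s-2}-\int_1^\infty G(\beta)\beta^{s-1}\,d\beta ,
\]
whose right-hand side is meromorphic on $\mathbb C$ — the last integral is entire since $G$ decays exponentially as $\beta\to\infty$, and the subtraction removes the pole at $s=2$ — with no poles in $\Re s>\tfrac12$ other than the zeros $\rho$ of $\zeta$ with $\Re\rho>\tfrac12$. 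Any such pole genuinely survives, because $\Gamma(\rho)\neq0$ and $\zeta(\rho-1)\neq0$ (indeed $\Re(\rho-1)<0$, a region where $\zeta$ vanishes only at the trivial, real zeros, while $\rho$ is not real). By analytic continuation the two sides agree on $\Re s>\tfrac12$, so the right-hand side is analytic there, and therefore $\zeta$ has no zero with $\Re s>\tfrac12$: RH follows. The part that demands the most care is precisely this converse — a priori one might fear that cancellation among many off-line zeros saves the error term — but the Mellin formulation circumvents it by reducing the implication to excluding a single off-line zero. In the direct implication the only real work is the uniform control of $1/\zeta$ just to the right of the critical line needed to justify the contour shift and bound the shifted integral, which is classical under RH.
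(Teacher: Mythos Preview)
Your proof is correct and follows essentially the same route as the paper: your function $G(\beta)$ coincides with the paper's $f(\beta)=\sum_{(x,y)\in\mathcal P}e^{-\beta(x+y)}$, you obtain the same Mellin transform $\Gamma(s)(\zeta(s-1)-\zeta(s))/\zeta(s)$, and both directions proceed identically via contour shifting (for RH $\Rightarrow$ estimate) and analytic continuation of the Mellin integral of the remainder (for estimate $\Rightarrow$ RH). Your derivation of the Mellin identity via M\"obius inversion and the expansion of $(e^x-1)^{-2}$ is a slight variant of the paper's direct computation over coprime pairs, and you are in fact more explicit than the paper on one point: you verify that a hypothetical zero $\rho$ of $\zeta$ with $\Re\rho>\tfrac12$ is genuinely a pole of $M$ (since $\Gamma(\rho)\ne0$ and $\zeta(\rho-1)\ne0$), which the paper asserts without justification.
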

Before we start with the proof, let us make a comment. The ``smoothing" we have chosen here is stronger than the smoothing of \cite{KW1} we described above. 
However, one could think about an even stronger one, corresponding to independent variables $X$ and $Y$ both following the Zeta distribution of parameter $\alpha > 1$, i.e. 
$$\forall n\in \NN,\quad \Prob[X=n]=\Prob[Y=n]=\frac1{\zeta(\alpha)}\frac1{n^\alpha}.$$
In this case, $\gcd(X,Y)$ is distributed according to  the Zeta distribution of parameter $2\alpha$. This explicit example shows that a too strong smoothing erases all oscillations of the probability of coprimality and makes the dependence  on the localization of the zeroes of the zeta function vanish.\medbreak

Now, we turn to the proof of the theorem. We begin by expressing the probability that $X$ and $Y$ are coprime as a function of~$\beta$:
\[
  \Prob[\gcd(X,Y)=1] = \sum_{(x,y) \in \mathcal P} \Prob[X=x,Y=y] = (e^{\beta}-1)^2 \sum_{(x,y) \in \mathcal P} e^{-\beta(x+y)}.
\]
We see therefore that the function $f \colon (0,+\infty) \to (0,+\infty)$ defined for all $\beta > 0$ by
\[
  f(\beta) = \sum_{(x,y) \in \mathcal P} e^{-\beta(x+y)}
\]
plays a key role in the analysis of this probability. 
In particular, condition~\eqref{eq:h} is equivalent to the following expansion of $f(\beta)$ as $\beta$ goes to $0$:
\begin{equation}
  \label{eq:h'}
  \tag{2'}
  \forall \epsilon > 0,\qquad f(\beta) = \frac{6}{\pi^2}\frac{1}{\beta^2} + O(\frac{1}{\beta^{\frac 1 2 + \epsilon}}).
\end{equation}
In the sequel, we will work with \eqref{eq:h'} instead of \eqref{eq:h}.

\section{From the probability estimate to Riemann's Hypothesis.}

The starting point of the proof is the following relation between the function $f$ and the Riemann zeta function.
\begin{lemma}
  For all complex number $s$ such that $\Re(s) > 2$,
\begin{equation}
  \label{eq:mellin}
  \int_0^\infty f(\beta)\beta^{s-1}d\beta = \Gamma(s)\frac{\zeta(s-1)-\zeta(s)}{\zeta(s)}.
\end{equation}
\end{lemma}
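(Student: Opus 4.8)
The plan is to recognize the left-hand side as a Mellin transform and evaluate it termwise. First I would recall the elementary identity $\int_0^\infty e^{-\beta a}\beta^{s-1}\,d\beta = \Gamma(s)a^{-s}$, valid for $a > 0$ and $\Re(s) > 0$. Applying this with $a = x+y$ for each pair $(x,y)\in\mathcal P$ and interchanging the sum over $\mathcal P$ with the integral, one gets
\[
  \int_0^\infty f(\beta)\beta^{s-1}\,d\beta = \Gamma(s)\sum_{(x,y)\in\mathcal P}\frac{1}{(x+y)^s}.
\]
The interchange is legitimate for $\Re(s) > 2$ by Fubini's theorem: since $\sum_{(x,y)\in\mathcal P}\int_0^\infty e^{-\beta(x+y)}\beta^{\Re(s)-1}\,d\beta = \Gamma(\Re(s))\sum_{(x,y)\in\mathcal P}(x+y)^{-\Re(s)}$, and grouping the pairs according to the value $n = x+y$ (for which there are at most $n-1$ of them) this series is bounded by $\Gamma(\Re(s))\sum_{n\ge 2}n^{1-\Re(s)} < \infty$.

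Next I would compute the Dirichlet series $\sum_{(x,y)\in\mathcal P}(x+y)^{-s}$ by the same grouping. Since $\gcd(x,y) = \gcd(x,x+y)$, for a fixed integer $n\ge 2$ the number of ordered pairs $(x,y)\in\NN^2$ with $x+y=n$ and $\gcd(x,y)=1$ equals the number of $x\in\{1,\dots,n-1\}$ with $\gcd(x,n)=1$, which is exactly $\varphi(n)$; there are no such pairs when $n=1$. Hence
\[
  \sum_{(x,y)\in\mathcal P}\frac1{(x+y)^s} = \sum_{n\ge 2}\frac{\varphi(n)}{n^s} = \sum_{n\ge 1}\frac{\varphi(n)}{n^s} - 1 .
\]

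Finally I would invoke the classical evaluation of the Dirichlet series of Euler's totient function. From the convolution identity $\varphi = \mu * \mathrm{id}$, i.e. $\varphi(n) = \sum_{d\mid n}\mu(d)\,\tfrac nd$, one obtains for $\Re(s) > 2$
\[
  \sum_{n\ge 1}\frac{\varphi(n)}{n^s} = \Big(\sum_{d\ge 1}\frac{\mu(d)}{d^s}\Big)\Big(\sum_{m\ge 1}\frac{m}{m^s}\Big) = \frac{1}{\zeta(s)}\,\zeta(s-1),
\]
the rearrangement being justified by absolute convergence in that half-plane (where $\Re(s-1) > 1$). Substituting back gives $\int_0^\infty f(\beta)\beta^{s-1}\,d\beta = \Gamma(s)\big(\zeta(s-1)/\zeta(s) - 1\big) = \Gamma(s)\,\frac{\zeta(s-1)-\zeta(s)}{\zeta(s)}$, which is the claim. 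I do not expect a genuine obstacle here: the only points deserving a word of care are the Fubini interchange and the combinatorial count of coprime pairs with a prescribed sum, both of which are routine.
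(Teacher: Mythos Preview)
Your argument is correct. The overall shape---compute the Mellin transform termwise via Fubini, then evaluate the Dirichlet series $\sum_{(x,y)\in\mathcal P}(x+y)^{-s}$---matches the paper. The one genuine difference is in how that series is computed: you group coprime pairs by their sum $n=x+y$, observe that there are $\varphi(n)$ of them, and then invoke the classical identity $\sum_{n\ge 1}\varphi(n)n^{-s}=\zeta(s-1)/\zeta(s)$ via the convolution $\varphi=\mu*\mathrm{id}$. The paper instead exploits directly the partition $\NN^2=\bigsqcup_{n\ge 1} n\mathcal P$ to write $\sum_{(x,y)\in\NN^2}(x+y)^{-s}=\zeta(s)\sum_{(x,y)\in\mathcal P}(x+y)^{-s}$, and then evaluates the left side as $\sum_{n\ge 1}(n-1)n^{-s}=\zeta(s-1)-\zeta(s)$ by counting all ordered pairs with a given sum. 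The paper's route is marginally more self-contained (no appeal to M\"obius or to the Dirichlet series of $\varphi$), while yours makes the link to the totient function explicit, which ties in naturally with the Mertens-type sums discussed earlier in the paper. Either way the computation is routine and both are equally valid.
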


\begin{proof}
  Let us first consider the case of a real number $s > 2$. By using the Fubini-Tonelli theorem,
  \[
    \int_0^\infty f(\beta)\beta^{s-1}d\beta = \sum_{(x,y) \in \mathcal P} \int_0^\infty e^{-\beta(x+y)} \beta^{s-1}d\beta
    = \sum_{(x,y) \in \mathcal P} \frac{\Gamma(s)}{(x+y)^s}.
  \]
  To simplify this series, we observe that each $(x,y) \in \NN^2$ can be written as $(nx',ny')$ with $n \in \NN$ and $\gcd(x',y') = 1$ in a unique way, leading to the identity
  \[
   \sum_{(x,y) \in \NN^2} \frac{1}{(x+y)^s} = \Biggl(\sum_{n=1}^\infty \frac 1{n^s}\Biggr)\Biggl(\sum_{(x,y) \in \mathcal P} \frac1{(x+y)^s}\Biggr).
  \]
  Since $s > 2$, the left-hand side is easily expressed in terms of the zeta function:
  \[
    \sum_{(x,y) \in \NN^2} \frac1{(x+y)^s} = \sum_{n=1}^\infty \frac{n-1}{n^s} = \zeta(s-1) - \zeta(s),
  \]
  which ends the proof of \eqref{eq:mellin}.
    The extension of \eqref{eq:mellin} to the complex domain $\Re(s) > 2$ follows the same steps, except that permutations of the order of summation are justified by the Fubini theorem.
%  We now turn to the proof of the less elementary identity \eqref{eq:inv_mellin}. It is based upon the following Mellin inversion formula which holds for all $c > 0$ and $z \in \mathbb C$ with $\Re(z) > 0$:
%  \[
%    e^{-z} = \frac{1}{2i\pi}\int_{c-i\infty}^{c+i\infty} \Gamma(s) z^{-s}ds.
%  \]
%  Since $t \mapsto \Gamma(c+it)$ can be seen as the Fourier transform of $u \mapsto e^{-e^{u}+cu}$, this formula is nothing but the inverse Fourier formula in disguise.
\end{proof}

Now, let us assume that \eqref{eq:h'} holds. Our strategy is to show that the function $\Delta$ defined for all $s$ with $\Re(s) > 2$ by
\[
  \Delta(s) = \Gamma(s)\frac{\zeta(s-1)-\zeta(s)}{\zeta(s)} - \frac{6}{\pi^2} \frac{1}{s-2}
\]
has a holomorphic continuation to the domain $\Re(s) > \frac 1 2$. This fact would indeed prevent $\zeta$ from having zeros with real part larger than $\frac12$, since such zeros would correspond to poles of $\Delta$ in the same region.
The celebrated functional equation (Theorem 2.1 in \cite{titchmarsh_1986_riemann}) satisfied by $\zeta$ shows that its zeros in the critical strip $0 < \Re(s) < 1$ are symmetrically distributed with respect to the vertical line $\Re(s) = \frac 12$. Hence, the function $\zeta$ would not vanish in the region $0 < \Re(s) < \frac 12$ either.

The holomorphic continuation is obtained by an integral representation of $\Delta$ based on formula~\eqref{eq:mellin} in the Lemma. Namely, for all $s$ with $\Re(s) > 2$,
%For every complex number $s$ with $\Re(s) > 2$,
%\[
%  \frac{1}{s-2} = \int_0^1 \frac{1}{\beta^2}\beta^{s-1}d\beta,
%\]
%hence formula~\eqref{eq:mellin} of the Lemma shows that
\[
  \Delta(s) = \int_0^1 \left(f(\beta) - \frac6{\pi^2}\frac1{\beta^2}\right)\beta^{s-1}d\beta + \int_1^\infty f(\beta)\beta^{s-1}d\beta.
\]
The estimate \eqref{eq:h'} for $f(\beta)$ in the neighbourhood of $0$ implies that the first integral defines a holomorphic function of $s$ in the domain $\Re(s) > \frac12$.
The second integral defines a holomorphic function of $s$ in the whole complex plane because $f(\beta) = O(e^{-2\beta})$ when $\beta$ goes to $+\infty$.

\section{From Riemann's Hypothesis to the probability estimate.}

Formula \eqref{eq:mellin} shows that the function $s \mapsto \Gamma(s)(\zeta(s-1)-\zeta(s))/\zeta(s)$ is the Mellin transform of the function $f$ in the domain $\Re(s) > 2$.
An application of the Mellin inversion formula leads therefore to the following integral representation of $f(\beta)$ for all $\beta > 0$ and $c > 2$,
\begin{equation*}
   f(\beta) = \lim_{T \to +\infty} \frac{1}{2i\pi}\int_{c-iT}^{c+iT} \Gamma(s)\frac{\zeta(s-1)-\zeta(s)}{\zeta(s)\beta^s}ds.
 \end{equation*}

 Let $\epsilon > 0$ and $T > 0$. Recall here that the zeta function has a unique holomorphic continuation to $\mathbb C\setminus\{1\}$, and that $1$ is a simple pole with residue $1$.
 Assuming that Riemann's Hypothesis holds, the only pole of the function $s \mapsto \Gamma(s)(\zeta(s-1)-\zeta(s))/\zeta(s)$ in the region $\Re(s) > \frac 12$ lies at $s = 2$ with residue $\Gamma(2)/(\zeta(2)\beta^2)$. We can therefore apply the residue theorem to show that
 \begin{align*}
  \label{eq:residue}
   \frac{6}{\pi^2\beta^2} =& \frac{1}{2i\pi}\int_{3-iT}^{3+iT} \Gamma(s)\frac{\zeta(s-1)-\zeta(s)}{\zeta(s)\beta^s}ds\\
&  + \frac{1}{2i\pi}\left(\int_{3+iT}^{\frac12 + \epsilon +iT} + \int_{\frac12 + \epsilon -iT}^{3-iT}\right) \Gamma(s)\frac{\zeta(s-1)-\zeta(s)}{\zeta(s)\beta^s}ds\\
 & + \frac{1}{2i\pi} \int_{\frac 12 + \epsilon+iT}^{\frac 12 + \epsilon -iT} \Gamma(s)\frac{\zeta(s-1)-\zeta(s)}{\zeta(s)\beta^s}ds.
 \end{align*}
 Taking $c = 3$ in the Mellin inversion formula above, we see that the integral on $[3-iT;3+iT]$ tends to $f(\beta)$ as $T$ tends to $+\infty$. In order to deal with the other three integrals, we are going to use the following estimates as $|t| \to +\infty$:
 \begin{itemize}
   \item As a consequence of the complex Stirling formula, the $\Gamma$ function has exponential decay along vertical lines. In particular,
     \[
       \sup_{\sigma \in [\frac12;3]}|\Gamma(\sigma+it)| = O(|t|^{\frac 52}e^{-\frac \pi 2 |t|}).
     \]
   \item The zeta function has polynomial growth along vertical lines (see Section 5.1 of \cite{titchmarsh_1986_riemann}). There exists $k > 0$ such that 
     \[
       \sup_{\sigma \geq -\frac12} |\zeta(\sigma + it)| = O(|t|^k).
     \]
   \item The inverse zeta function has sub-polynomial growth along vertical lines in the region $\Re(s) > \frac 12$ as a consequence of Riemann's Hypothesis (see Theorem 14.2 in \cite{titchmarsh_1986_riemann}). For all $\alpha >  0$,
 \[
   \sup_{\sigma \in [\frac12+\epsilon;3]}\frac{1}{|\zeta(\sigma + it)|} = O(|t|^\alpha).
 \]
 \end{itemize}
We can see with these bounds that both integrals along the horizontals segments $[3+iT;\frac12 + \epsilon + iT]$ and $[\frac12 + \epsilon - iT; 3-iT]$ tend to $0$ as $T \to +\infty$. Similarly, the integral along the vertical segment $[\frac12+\epsilon+iT;\frac12+\epsilon-iT]$ is bounded by $O(\beta^{-\frac 12-\epsilon})$.

% \bib, bibdiv, biblist are defined by the amsrefs package.
\begin{bibdiv}
\begin{biblist}

\bib{dirichlet_1889_bestimmung}{incollection}{
      author={Dirichlet, Peter Gustav~Lejeune},
       title={{\"U}ber die {B}estimmung der mittleren {W}erthe in der
  {Z}ahlentheorie},
        date={1849},
   booktitle={{A}bhandlungen der {K}{\"o}niglichen {A}kademie der
  {W}issenchaften zu {B}erlin},
}

\bib{hardy_2008_introduction}{book}{
      author={Hardy, G.~H.},
      author={Wright, E.~M.},
       title={An introduction to the theory of numbers},
     edition={Sixth},
   publisher={Oxford University Press, Oxford},
        date={2008},
        ISBN={978-0-19-921986-5},
        note={Revised by D. R. Heath-Brown and J. H. Silverman, With a foreword
  by Andrew Wiles},
      review={\MR{2445243}},
}

\bib{KW1}{article}{
   author={Kaczorowski, Jerzy},
   author={Wiertelak, Kazimierz},
   title={Oscillations of the remainder term related to the Euler totient
   function},
   journal={J. Number Theory},
   volume={130},
   date={2010},
   number={12},
   pages={2683--2700},
   issn={0022-314X},
   review={\MR{2684490}},
   %doi={10.1016/j.jnt.2010.06.010},
}

\bib{KW2}{article}{
   author={Kaczorowski, Jerzy},
   author={Wiertelak, Kazimierz},
   title={Smoothing arithmetic error terms: the case of the Euler $\phi$
   function},
   journal={Math. Nachr.},
   volume={283},
   date={2010},
   number={11},
   pages={1637--1645},
   issn={0025-584X},
   review={\MR{2759800}},
   %doi={10.1002/mana.200810048},
}

\bib{mertens_1874_asymptotische}{article}{
      author={Mertens, F.},
       title={Ueber einige asymptotische {G}esetze der {Z}ahlentheorie},
        date={1874},
        ISSN={0075-4102},
     journal={J. Reine Angew. Math.},
      volume={77},
       pages={289\ndash 338},
         url={http://dx.doi.org/10.1515/crll.1874.77.289},
      review={\MR{1579608}},
}

\bib{montgomery_1987_fluctuations}{article}{
      author={Montgomery, Hugh~L.},
       title={Fluctuations in the mean of {E}uler's phi function},
        date={1987},
        ISSN={0253-4142},
     journal={Proc. Indian Acad. Sci. Math. Sci.},
      volume={97},
      number={1-3},
       pages={239\ndash 245 (1988)},
         url={http://dx.doi.org/10.1007/BF02837826},
      review={\MR{983617}},
}

\bib{titchmarsh_1986_riemann}{book}{
      author={Titchmarsh, E.~C.},
       title={The theory of the {R}iemann zeta-function},
     edition={Second},
   publisher={The Clarendon Press, Oxford University Press, New York},
        date={1986},
        ISBN={0-19-853369-1},
        note={Edited and with a preface by D. R. Heath-Brown},
      review={\MR{882550}},
}

\bib{walfisz_1963_weylsche}{book}{
      author={Walfisz, Arnold},
       title={Weylsche {E}xponentialsummen in der neueren {Z}ahlentheorie},
      series={Mathematische Forschungsberichte, XV},
   publisher={VEB Deutscher Verlag der Wissenschaften, Berlin},
        date={1963},
      review={\MR{0220685}},
}

\end{biblist}
\end{bibdiv}

\end{document}